\def\rnum#1{\expandafter{\romannumeral #1}}
\def\Rnum#1{\uppercase\expandafter{\romannumeral #1}} 
\theoremstyle{plain}
\newtheorem{thm}{Theorem}[section]
\newtheorem{pro}{Proposition}[section]
\newtheorem{lem}{Lemma}[section]
\newtheorem{rem}{Remark}[section]
\newtheorem{Def}{Definition}[section]
\newcommand{\Rn}{\mathbb{R^{\textit{n}}}}
\newcommand{\SP}{\mathbb{S}}
\newcommand{\N}{\mathbb{N}}
\newcommand{\R}{\mathbb{R}}
\newcommand{\C}{\mathbb{C}}
\newcommand{\dint}{\displaystyle \int}
\newcommand{\dis}{\displaystyle}
\newcommand{\vphi}{\varphi}
\newcommand{\vep}{\varepsilon}
\newcommand{\med}{\mathbf{Med}}
\newcommand{\lan}{\langle}
\newcommand{\ran}{\rangle}
\newcommand{\llq}{\lq \lq}
\newcommand{\rrq}{\rq \rq}
\newcommand{\ho}{\"} % Holder のため
\newcommand{\wti}{\widetilde}
\newcommand{\tsupp}{\textnormal{supp}\,}
\newcommand{\tnormal}{\textnormal}
\newcommand{\tn}{\textnormal}
\newcommand{\ul}{\underline}
\newcommand{\nn}[1]{{\left\vert\kern-0.25ex\left\vert\kern-0.25ex\left\vert #1 
    \right\vert\kern-0.25ex\right\vert\kern-0.25ex\right\vert}} % 3本線norm
\DeclareMathOperator*{\ei}{ess-inf}
\begin{document}

\title{Fractional medians and their maximal functions}
\author{Yohei Tsutsui \footnote{Department of Mathematics, Graduate School of Science, Kyoto University, Kyoto 606-8502, Japan. \newline e-mail: \texttt{ytsutsui@math.kyoto-u.ac.jp}}}
\date{}
\maketitle

\begin{abstract}
In this article, we introduce the fractional medians, give an expression of the set of all fractional medians in terms of non-increasing rearrangements and then investigate mapping properties of the fractional maximal operators defined by such medians.
The maximal operator is a generalization of that in Strömberg \cite{St}.
It turns out that our maximal operator is a more smooth operator than the usual fractional maximal operator.
Further, we give another proof of the embedding from $BV$ to $L^{n/(n-1),1}$ due to Alvino \cite{A} by using the usual medians.
\end{abstract}

{\bf Keywords} Fractional medians,\ Fractional maximal operator,\ Non-increasing rearrangements.

{\bf 2020 Mathematics Subject Classification} 42B02, 46E30, 46E35.

%%%%%%%%%%%%%%%%%%%%%%%%%%%%%%%%%%%%%%%%%%%%%%%%%%%%%%%%%%%%%%%
\section{Introduction}
The integral average of function $f$, defined on $\Rn$;
\[
\lan f \ran_E := \dfrac{1}{|E|} \int_E |f| dx,
\]
for $E \subset \Rn$ with $0<|E|<\infty$, plays an important role in Harmonic analysis.
Of course, if $f$ is not locally integrable, the average is not useful.
On the other hand, for such functions, medians exist and are finite.
For example, the median of $|x|^{-n}$ is finite.
For a real-valued measurable function $f$ and the same $E$ as above and $\alpha \in (0,1)$, we say that a real number $m$ is a \textit{median} of $f$ if it satisfies
\begin{equation} \label{median def}
\left|\left\{x \in E; f(x) < m \right\} \right| \le (1-\alpha) |E| \ \tn{and} \ \left|\left\{x \in E; f(x) > m \right\} \right| \le \alpha |E|.
\end{equation}
Let us denote $\med^\alpha_E[f]$ as the set of all the medians.
The integral average is the mean of function in the $L^1$ sense, whereas the median can be regarded the mean of function in the $L^{1,\infty}$ sense.
The gap of them is very important in Harmonic analysis, especially for boundedness of several operators.
Unfortunately, median is not uniquely determined.
To overcome this difficulty, several authors used the maximal median, $\max \med_E^\alpha[f]$, which is also a median of $f$.

%%%%% Application of medians %%%%
We explain how median have been applied.
Carleson \cite{C} gave a definition of $BMO$, all of functions having bounded mean oscillation, by using the maximal median.
Since his definition does not use the integral average, his definition appears to be weaker than the conventional one.
Strömberg \cite{St} consider the case of critical index that Carleson \cite{C} did not addressed, and then established the equivalence
\[
\|f\|_{BMO} \approx \dis \sup_Q L \left[(f - m_Q)\chi_Q \right] (\alpha |Q|),
\]
where the supremum is taken over all cubes $Q \subset \Rn,\ m_Q \in \med_Q^\alpha[f],\ 0<\alpha \le 1/2$, and $L$ is the left-continuous \textit{non-increasing rearrangement}, see below for the definition.
This equivalence also indicates that the local integrability is not necessary for the definition of $BMO$.
Furthermore, it was mentioned in \cite{St} that this equivalence fails if the rearrangement $L$ is replaced by another rearrangement $R$, which is \textit{right} continuous.
The characteristic function $f(x) = \chi_E(x)$ with $0<|E|$ and $0<|E^c|$ gives a counterexample, indeed $\|\chi_E\|_{BMO} = 1/2$ and
\[
\sup_Q R \left[(\chi_E - (\chi_E)_Q)\chi_Q \right] (|Q|/2) = \infty,
\]
where $(\chi_E)_Q \in \med_{Q}^{1/2}[\chi_E]$.
In \cite{St}, he also introduced the local sharp maximal operator;
\[
M^\sharp_{0,\alpha}f(x) := \sup_{Q \ni x} \inf_{c \in \C} L[f \chi_Q](\alpha |Q|),
\]
and make use of it to give pointwise estimate for the Riesz transform $R_j$;
\begin{equation} \label{sharp maximal}
M^\sharp_{0,\alpha}(R_jf)(x) \lesssim M^\sharp f(x),
\end{equation}
where $M^\sharp$ is the sharp maximal operator.
This operator appears to have been first addressed by John \cite{J}.
Obviously, we have from \eqref{sharp maximal} the inequality $M^\sharp_{0,\alpha}(R_jf)(x) \lesssim M f(x)$.
This can be compared with a well-known inequality; $M^\sharp (R_jf) \lesssim M(|f|^r)^{1/r}$ with any $r \in (1,\infty)$.
This improvement is achieved by using the median instead of the integral mean.
Jawerth-Torchinsky \cite{J-T} provided a representation, using the local sharp maximal operator, of the $K$-functional including $BMO$.
Fujii \cite{F2} proved the following; for almost everywhere $x_0$ that for any sequence of medians $\{m_{Q_j}\}_{j=1}^\infty$ of a measurable function $f$ with cubes $Q_j$ shrinking to $x_0$, the limit $\lim_{j \to \infty} m_{Q_j}$ equals $f(x_0)$.
Poelhuis-Torchinsky \cite{P-T} showed the same fact in the general case.
See 5 in Remark \ref{rem bdd} for this fact.
Lerner \cite{L4} gave a useful decomposition of a measurable function, which is an improvement on previous studies such as Carleson \cite{C2}, Garnett-Jones \cite{G-J}, and Fujii \cite{F1}.
We can find applications of medians in Poincar\'e inequality, see Theorem 3.51 in Ambrosio-Fusco-Pallara \cite{A-F-P} for example.

%%%% def of fractional medians %%%%
Here, we introduce the \textit{fractional median}.
The motivation of our definition is to ensure that the inequality
\[
m \le \inf_{r>0} \left(\dfrac{1}{\alpha |Q|} \int_Q |f|^r dx \right)^{1/r},\ \tn{for any}\ m \in \med^\alpha_Q[f]
\]
holds even in the fractional case.
Let $f$ be a real-valued measurable function, $E \subset \Rn$ with $0<|E|<\infty$, $\gamma \in [0,n)$ and $\alpha \in (0,|E|^{\gamma/n})$.
We say a real number $m$ is a fractional median of $f$ if
\begin{align*}
\left|\left\{x \in E; f(x) < m \right\} \right| & \le |E| - \alpha |E|^{1-\gamma/n} \ \tn{and} \\
\left|\left\{x \in E; f(x) > m \right\} \right| & \le \alpha |E|^{1-\gamma/n}.
\end{align*}
We denote the set of all fractional medians by $\med^{\alpha,\gamma}_E[f]$.
While $|B(0,1)|^{\alpha/n} \lan |x|^{-n} \ran_{B(0,1)} = \infty$, its fractional median exists and is given by
\[
\med_{B(0,1)}^{\alpha, \gamma}[|x|^{-n}] = \left\{\alpha^{-1} \right\}.
\]
Furthermore, the fractional median of $|x|^{-m}$ for $m>n$ also exists.
One can see that for any $m \in \med_E^{\alpha,\gamma}[f]$, it holds
\[
m \le \inf_{r>0} \left(\dfrac{|E|^{\gamma/n}}{\alpha |E|} \int_E |f|^r dx \right)^{1/r},
\]
which is showed in Proposition \ref{L}.
This inequality is consistent with the fact that the definition of the median does not require local integrability.
In general, not only in the case $\gamma=0$, the fractional median is not uniquely determined.
One of the purpose of this article is to give a representation of $\med^{\alpha, \gamma}_E[f]$ in terms of rearrangements $R$ and $L$;
\[
\med^{\alpha,\gamma}_E[f] = \left[R[f \chi_E]\left(\alpha |E|^{1-\gamma/n} \right), L[f \chi_E]\left(\alpha |E|^{1-\gamma/n} \right) \right],
\]
see Theorem \ref{express fra med}.
This means that the both ends are the fractional medians of $f$, and the right end is the fractional maximal median.
Hence, the medians are quantitatively controlled by these rearrangements from both below and above.
Poelhuis-Torchinsky \cite{P-T} showed the inclusion \llq $\subset$\rrq in the case $\gamma=0$.

%%%%%% maximal op. via medians %%%%%%
Our next interest lies on the mapping properties of the maximal function;
\[
m_{\alpha, \gamma}^L[f](x) := \dis \sup_Q L[f \chi_Q]\left(\alpha |Q|^{1-\gamma/n} \right) \chi_Q(x).
\]
See Section 4.
This operator with $\gamma=0$ was introduced by Strömberg \cite{St}.
It is found that $m^L_{\alpha,0}$ does not cause significant quantitative changes, see Remark 4.4.
It is natural to ask the mapping property of $m^L_{\alpha,\gamma}$.
Although the fractional maximal operator
\[
M_\gamma f(x) := \sup_Q |Q|^{\gamma/n} \lan f \ran_Q \chi_Q(x),\ (0<\gamma<n)
\]
maps $L^p$ to $L^{\wti{p}}$ for all $p \in (1,n/\gamma)$ with $\wti{p} = np/(n-\gamma)$, our maximal operator $m^L_{\alpha,\gamma}$ maps $L^{p,q}$ to $L^{\wti{p},q}$ for any $p,\ q \in (0,\infty]$ and $\wti{p} \in [np/(n-\gamma),\infty)$.
See Theorem \ref{maximal} in Section 4.
The proof follows from a pointwise estimate of the distribution function.
While the maximal operator $m^L_{\alpha,0}$ does not cause significant changes in functions, the operator provides a bridge between $f$ and $M(\chi_{\{|f| > \lambda \}})$ in terms of the distribution function scale.
By applying this fact, we can offer another proof of Alvino's embedding $BV \hookrightarrow L^{n/(n-1),1}$ with the aid of the coarea formula in Section 5.
Here, $BV$ is the space of all functions of bounded variation.
Our estimate is covered by a result in Spector \cite{Spe}, although our argument is more succinct.

\vspace{5mm}

%%%%%%%%%%%%%%%%%%%%%%%%%%%%%%%%%%%%%%%%%%%%%%%%%%%%%%%%%%%%%%
\section{Preliminary}
For a measurable subset $\Omega \subset \R^n$, let us denote the $n$-dimensional Lebesgue measure of $\Omega$ by $|\Omega|$.
The distribution function of a Lebesgue measurable function $f$ on $\Rn$ is defined  by 
\[
d_f(\lambda) := |\{x \in \R^n; |f(x)| > \lambda \}|,\ (\lambda \in [0,\infty)).
\]
It holds that $d_f(0) = |\{f \not = 0\}| \le |\tsupp f|$. 
In this article, we define $\mathbf{F}$ as the set of all Lebesgue measurable functions on $\Rn$ satisfying
\[
|f(x)| < \infty \ \tn{a.e.}\ x \in \Rn \ \tn{and} \ d_f(\lambda) < \infty,\ (\lambda \in (0,\infty)).
\]
Let us also define $\mathbf{F}_0$ as the space of all $f \in \mathbf{F}$ satisfying that for any bounded subset $E \subset \Rn$,
\[
d_{f \chi_E}(\lambda) \to 0 \ \tn{as}\ \lambda \to \infty.
\]
It is well-known that for $f \in \mathbf{F}$, the distribution function $d_f$ is right continuous on $(0,\infty)$.
The latter condition of $\mathbf{F}$ ensures this continuity.
In fact, if $f(x) := \lambda + |x|^{-1},\ (|x| \ge 1)$, then $d_f(\lambda + \vep) < \infty$ for any $\vep>0$, however $d_f(\lambda)=\infty$.
For $0<p<\infty$ and $0 < q \le \infty$, Lorentz spaces $L^{p,q}(\Rn)$ are defined by quasi-norms
\[
\|f\|_{L^{p,q}} :=
\begin{cases}
\; \left\{p \dint_0^\infty \left(\lambda d_f(\lambda)^{1/p} \right)^q \dfrac{d\lambda}{\lambda} \right\}^{1/q} \ & \tn{if} \ q<\infty \\
\; \dis \sup_{\lambda >0} \lambda d_f(\lambda)^{1/p} \ & \tn{if} \ q=\infty.
\end{cases}
\]

\vspace{5mm}

%%%%%%%%%%%%%%%%%%%%%%%%%%%%%%%%
\subsection{Non-increasing rearrangements}
Although Lorentz quasi-norms are defined by the distribution function $d_f$, we prefer non-increasing rearrangements; for $f \in \mathbf{F}$ and $t \in [0,\infty)$
\begin{align*}
R_1[f](t) &:= \dis \inf\{\lambda \ge 0: d_f(\lambda) \le t \} \ \tn{and} \\
L_1[f](t) &:= \inf \{\lambda \ge 0: d_f(\lambda) < t \},
\end{align*}
where $\inf \emptyset := \infty$.
It is well-known that if $f \in \mathbf{F}_0$, then $R_1[f](t) < \infty$ for all $t \in (0,\infty)$, thus
\[
R_1[f](t) = \min \{\lambda \ge 0: d_f(\lambda) \le t \},\ (t \in (0,\infty)).
\]
Both $R_1[f]$ and $L_1[f]$ are equimeasurable with $|f|$, which means that
\[
d_f(\lambda) = d_{R_1[f]}(\lambda) = d_{L_1[f]}(\lambda),\ (\lambda \in (0,\infty)).
\]
It is also well-known that for $f \in \mathbf{F}_0$, the rearrangement $R_1[f]$ is right continuous, while $L_1[f]$ is left continuous.
We give a proof for the latter in Appendix, for the completeness.
We have other representations of them;
\begin{align*}
R_1[f](t) = R_2[f](t) &:= \inf_{|A| \le t} \|f\|_{L^\infty(A^c)},\ (t \in [0,\infty)), \ \tn{and} \\
L_1[f](t) = L_2[f](t) &:= \sup_{|A| = t} \ei_{A} |f| = L_2^\ast[f](t) := \sup_{|A| =t} \inf_A |f|,\ (t \in (0,\infty)).
\end{align*}
Here, of course,
\[
\dis \ei_A |f| := \sup \left\{\lambda>0; \left|\left\{A; |f| < \lambda \right\} \right| =0 \right\},
\]
where $\sup \emptyset := 0$.
One can see that $R_1[f](0) = R_2[f](0) = \|f\|_{L^\infty},\ L_1[f](0) = L_2[f](0) = \infty$ and $L^\ast_2[f](0) = \sup_{\Rn} |f|$.
See Proposition \ref{pointwise} below for the proof of these equalities.
The rearrangement $R_2$ can be found in Hyt\ho{o}nen \cite{H}, and $L_1$ and $L_2^\ast$ are applied by Lerner \cite{L1}, \cite{L2}, for example.
Based on such equalities, we write $R:= R_1=R_2$ and $L:=L_1=L_2=L_2^\ast$.
Lorentz spaces $L^{p,q}(\Rn)$ can be written as follows;
\[
\|f\|_{L^{p,q}} \approx
\begin{cases}
\; \left\{\dint_0^\infty \left(t^{1/p} N[f](t) \right)^q \dfrac{dt}{t} \right\}^{1/q}\ & \tnormal{if}\ q<\infty \\
\; \dis \sup_{t>0} t^{1/p} N[f](t) \ & \tnormal{if}\ q=\infty
\end{cases}
\]
where $N=R, L$.

\medskip

We show equalities above among rearrangements for the sake of completeness.

%%%%%%%%%%%%
\begin{pro} \label{pro pointwise}
Let $f \in \mathbf{F}$ and $t \in (0,\infty)$.
Then one has
\[
R_1[f](t) = R_2[f] (t) \le L_1[f](t) = L_2[f](t) = L_2^\ast[f](t).
\]
\end{pro}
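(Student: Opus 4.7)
The plan is to establish the chain $R_1 = R_2 \le L_1 = L_2 = L_2^\ast$ by proving each link as a pair of $\le$ inequalities, all argued on level sets of $|f|$. The middle inequality $R_1 \le L_1$ is immediate, since $\{\lambda \ge 0 : d_f(\lambda) < t\} \subset \{\lambda \ge 0 : d_f(\lambda) \le t\}$ so the infimum defining $L_1$ is taken over a smaller set.

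For $R_1 = R_2$, I would first argue $R_1 \le R_2$: given any admissible $A$ with $|A| \le t$ and any $\lambda > \|f\|_{L^\infty(A^c)}$, one has $\{|f| > \lambda\} \subset A$ up to a null set, so $d_f(\lambda) \le |A| \le t$, whence $R_1[f](t) \le \lambda$; infimising over $\lambda$ and $A$ gives the inequality. For the reverse $R_2 \le R_1$, for each $\lambda \ge 0$ with $d_f(\lambda) \le t$ the choice $A := \{|f| > \lambda\}$ witnesses $|A| \le t$ and $\|f\|_{L^\infty(A^c)} \le \lambda$, so $R_2[f](t) \le \lambda$; infimising over $\lambda$ gives $R_2 \le R_1$.

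For $L_1 = L_2$, I would use the characterisation $\ei_A |f| = \sup\{\lambda > 0 : |\{x \in A : |f(x)| < \lambda\}| = 0\}$ throughout. To show $L_2 \le L_1$: fix $A$ with $|A| = t$ and $\lambda < \ei_A |f|$. Then $|f| \ge \lambda$ a.e.\ on $A$, so for every $\mu < \lambda$ the set $\{|f| > \mu\}$ contains $A$ up to a null set, giving $d_f(\mu) \ge t$. Hence the set $\{\mu : d_f(\mu) < t\}$ lies in $[\lambda,\infty)$, so $L_1[f](t) \ge \lambda$, and taking suprema yields $L_1 \ge L_2$. For the reverse, if $\lambda > L_2[f](t)$ I claim $d_f(\lambda) < t$: otherwise $|\{|f| > \lambda\}| \ge t$, so by the continuity of Lebesgue measure one can extract $A \subset \{|f| > \lambda\}$ with $|A| = t$, on which $\ei_A |f| \ge \lambda$, contradicting $L_2[f](t) < \lambda$. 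Hence $L_1[f](t) \le \lambda$, and infimising over such $\lambda$ gives $L_1 \le L_2$.

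Finally, for $L_2 = L_2^\ast$, the inequality $L_2 \ge L_2^\ast$ is trivial since $\ei_A |f| \ge \inf_A |f|$. The reverse $L_2 \le L_2^\ast$ is where I expect the main subtlety: given $A$ with $|A| = t$, I want to replace $A$ by $A' := A \cap \{|f| \ge \ei_A |f|\}$ of the same measure so that $\inf_{A'} |f| \ge \ei_A |f|$. The point to check is that $|\{x \in A : |f(x)| < \ei_A |f|\}| = 0$, which I would obtain by writing this set as the countable union $\bigcup_k \{x \in A : |f(x)| < \ei_A |f| - 1/k\}$ and using the definition of ess-inf; the edge case $\ei_A |f| = \infty$ cannot occur because $f \in \mathbf{F}$ forces $|f| < \infty$ a.e., while the case $\ei_A |f| = 0$ is handled by setting $A' = A$ itself. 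This measure-refinement step, together with the harmless but careful handling of the values $0$ and $\infty$ of the ess-inf, is the part requiring the most attention; the remaining inequalities are immediate from the definitions of $d_f$ and the essential supremum.
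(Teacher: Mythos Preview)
Your proof is correct and uses the same level-set ideas as the paper. The only difference is organizational, and it concerns precisely the step you flag as ``the main subtlety.'' The paper orders the $L$-inequalities as a cycle
\[
L_1[f](t) \le L_2^\ast[f](t) \le L_2[f](t) \le L_1[f](t),
\]
so the direction $L_2 \le L_2^\ast$ is never proved directly and the measure-refinement $A \mapsto A \cap \{|f| \ge \ei_A|f|\}$ is never needed. In fact your own argument for $L_1 \le L_2$ already yields the stronger $L_1 \le L_2^\ast$: the set $A \subset \{|f| > \lambda\}$ with $|A| = t$ that you extract satisfies $\inf_A |f| \ge \lambda$, not merely $\ei_A |f| \ge \lambda$. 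Combining that with $L_2^\ast \le L_2$ (trivial) and your $L_2 \le L_1$ closes the cycle and makes the refinement step superfluous.
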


\medskip

%%%%%
\begin{rem}
\begin{enumerate}
\item
The first and second equality and first inequality remain to be true at $t=0$, as we see above.
If we drop the condition \llq$d_f(\lambda)< \infty,\ (\lambda > 0)$\rrq, then the second equality fails at $t=0$.
For example, if $f \equiv 1$, then $L_1[f](0) = \infty$ and $L_2[f](0)=1$.

\item
The opposite inequality $L[f](t) \le R[f](t),\ (t \in (0,\infty))$ fails in general. 
Step functions give counterexamples.

\item
We use a local analogy of the inequality of the form;
\[
R[f \chi_E](\alpha |E|) \le L[f \chi_E](\alpha |E|),\ (\alpha \in (0,1),\ 0<|E|<\infty).
\]
See Proposition \ref{L} for a representation of $L[f \chi_E](\alpha |E|)$.
\end{enumerate}
\end{rem}

\medskip

\begin{proof}
\underline{$\circ R_1[f](t) \le R_2[f](t)$:} We may assume that $R_2[f](t) < \infty$.
For any $\vep>0$ there is a measurable subset $A_\vep$ such that 
\[
\dis |A_\vep| \le t \ \& \ \infty > R_2[f](t) + \vep \ge \|f\|_{L^\infty(A_\vep^c)}.
\]
It also holds that $|\{|f| > \|f\|_{L^\infty(A_\vep^c)} \}| \le |A_\vep| \le t$.
%\footnote{ここで考えている集合 $\{|f| > \|f\|_{L^\infty(F_\vep^c)}$ が重要である.}
Therefore,
\[
R_1[f](t) \le \|f\|_{L^\infty(A_\vep^c)} \le R_2[f](t) + \vep.
\]

\medskip

\underline{$\circ R_2[f](t) \le R_1[f](t)$:}
Since $|\{|f| > R_1[f](t)\}| \le t$, one has
%\footnote{この集合 $\{|f| > R_1\}$ を考えるのが大事}
\[
\dis R_2[f](t) = \inf_{|A| \le t} \|f\|_{L^\infty(A^c)} \le \|f\|_{L^\infty(\{|f| \le R_1[f](t)\})} \le R_1[f](t).
\]

\medskip

\underline{$\circ R_1[f](t) \le L_1[f](t)$ and $L_2^\ast[f](t) \le L_2[f](t)$:} These are immediate from definitions.

\medskip

\underline{$\circ L_1[f](t) \le L_2^\ast[f](t)$:}
In the case $L_1[f](t) = \infty$, because $d_f(\lambda) \ge t$ for any $\lambda >0$ and $(\Rn; dx)$ is a non-atomic space, there is a subset $A \subset \{|f| > \lambda \}$ such that $|A|=t$, which implies $L_2^\ast[f](t) \ge \lambda$.
Thus, $L_2[f](t) = L_2^\ast[f](t) = \infty$.
In the case $0<L_1[f](t) < \infty$, it holds $d_f(L_1[f](t) - \vep) \ge t$ for small $\vep > 0$.
Similarly as above, there exists a subset $B \subset \{|f| > L_1[f](t) - \vep \}$ so that $|B|=t$, which yields
\[
L_2^\ast[f](t) \ge \dis \inf_B |f|  \ge L_1[f](t) - \vep.
\]
Therefore, $L_1[f](t) \le L_2^\ast[f](t) \le L_2[f](t)$.

\medskip

\underline{$\circ L_2[f](t) \le L_1[f](t)$:}
In the case $\{\lambda \ge 0; d_f(\lambda)<t \} = \emptyset$, we see $L_1[f](t) = \infty$.
If not the case, we take $\lambda \in \{\lambda \ge 0; d_f(\lambda)<t \}$.
For any $A \subset \Rn$ with $|A| =t$, there is $x_A \in A$, for which $|f(x_A)| \le \lambda$.
Indeed, if $|f| >\lambda$ on $A$, then $|A| \le d_f(\lambda)<t$.
For such $A$, we have $|A \backslash \{|f| > \lambda \}| \ge t - d_f(\lambda) >0$, thus
\[
\dis \sup_{|A|=t} \ei_A |f| \le \sup_{|A| =t} \ei_{A \backslash \{|f| > \lambda \}} |f| \le \lambda.
\]
Hence, one has $L_2[f](t) \le \lambda$, which means that
\[
L_2[f](t) \le L_1[f](t).
\]
\end{proof}

\medskip

We end this subsection with a local analogy of equality between $L_1[f]$ and $L_2[f]$ in Proposition \ref{pro pointwise}, and a comparison of $L[f \chi_E]$ with fractional integral average.
%%%%%%%%%
\begin{pro} \label{L}
Let $f \in \mathbf{F},\ \gamma \in [0,n),\ E \subset \Rn$ with $0<|E|<\infty$ and $\alpha \in (0,|E|^{\gamma/n})$.\par
(1) 
\[
L[f \chi_E](\alpha |E|^{1-\gamma/n}) = \dis \sup_{\substack{|A|=\alpha |E|^{1-\gamma/n} \\ A \subset E}} \ei_A |f|.
\]

\medskip

(2)
\[
L[f\chi_E](\alpha |E|^{1-\gamma/n}) \le \inf_{r \in (0,\infty)} \left(\dfrac{|E|^{\gamma/n}}{\alpha |E|} \int_E |f|^r dx \right)^{1/r}. 
\]
\end{pro}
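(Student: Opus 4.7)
\smallskip

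My overall plan is that both parts should follow fairly directly from Proposition \ref{pro pointwise} together with elementary manipulations; the fractional parameter $\gamma$ only re-scales the relevant threshold $t := \alpha|E|^{1-\gamma/n}$, and the hypothesis $\alpha \in (0,|E|^{\gamma/n})$ precisely ensures $t \le |E|$, so that a set $A \subset E$ of measure $t$ exists.

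For part (1), I would apply Proposition \ref{pro pointwise} to $g := f\chi_E \in \mathbf{F}$ to get
\[
L[f\chi_E](t) = L_2[f\chi_E](t) = \sup_{|A|=t}\es_A|f\chi_E|.
\]
The inequality $\ge$ in part (1) is immediate by restricting the supremum to $A \subset E$ (which is legitimate since $t \le |E|$). For $\le$, the point is that if $|A|=t$ but $|A\cap E^c|>0$, then $f\chi_E$ vanishes on a set of positive measure inside $A$, so $\es_A|f\chi_E|=0$. Consequently, the unrestricted supremum is attained (up to null sets) on subsets of $E$, and on such sets $\es_A|f\chi_E|=\es_A|f|$. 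This yields the claimed equality.

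For part (2), fix $r\in(0,\infty)$ and note that the Chebyshev inequality gives, for every $\lambda>0$,
\[
d_{f\chi_E}(\lambda) \le \lambda^{-r}\int_E |f|^r\,dx.
\]
Set
\[
\lambda_0 := \left(\dfrac{|E|^{\gamma/n}}{\alpha|E|}\int_E|f|^r\,dx\right)^{1/r},
\]
which we may assume finite (otherwise the bound to prove is vacuous). For any $\vep>0$,
\[
d_{f\chi_E}(\lambda_0+\vep) \le (\lambda_0+\vep)^{-r}\int_E|f|^r\,dx < \lambda_0^{-r}\int_E|f|^r\,dx = \alpha|E|^{1-\gamma/n},
\]
so $\lambda_0+\vep$ lies in the set defining $L_1[f\chi_E]\bigl(\alpha|E|^{1-\gamma/n}\bigr)$; hence $L[f\chi_E](\alpha|E|^{1-\gamma/n})\le\lambda_0+\vep$. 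Letting $\vep\to 0$ and then taking the infimum over $r$ concludes the proof.

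I do not expect a serious obstacle: the only subtle point is the strict-versus-non-strict inequality in the definition of $L_1$, which forces the small perturbation by $\vep$ in part (2); the fractional exponent $\gamma/n$ simply sits inside the numerical threshold and plays no structural role.
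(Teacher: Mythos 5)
Your strategy for part (1) is essentially the paper's: split the supremum defining $L_2[f\chi_E]$ according to whether $A$ is (up to a null set) contained in $E$ or satisfies $|A\cap E^c|>0$, and note that in the latter case the competitor contributes $0$ because $f\chi_E$ vanishes on a positive-measure subset of $A$. However, you have written the wrong rearrangement throughout: you use $\es_A|f\chi_E|$ (essential supremum), whereas $L_2[f](t)=\sup_{|A|=t}\ei_A|f|$ is defined with the essential \emph{infimum}, which is also what appears in the statement being proved. Read literally, your identity $L[f\chi_E](t)=\sup_{|A|=t}\es_A|f\chi_E|$ is false (the right-hand side equals $\|f\chi_E\|_{L^\infty}$ for every $t>0$), and the claim that vanishing on a positive-measure subset of $A$ forces $\es_A|f\chi_E|=0$ is also false. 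Every step becomes correct, and the argument goes through exactly as intended, once $\es$ is replaced by $\ei$; I read this as a notational slip rather than a conceptual gap, but it must be corrected, since as written two of your displayed assertions are wrong.

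For part (2) you take a genuinely different route. The paper deduces (2) from (1): for $A\subset E$ with $|A|=\alpha|E|^{1-\gamma/n}$ one has $\lan |f|^r\ran_E\ge \frac{|A|}{|E|}\,\ei_A|f|^r$, and taking the supremum over such $A$ gives the bound. You bypass (1) entirely, using Chebyshev's inequality to show $d_{f\chi_E}(\lambda_0+\vep)<\alpha|E|^{1-\gamma/n}$ and concluding directly from the definition of $L_1$ as an infimum. Both arguments are correct and comparably short; yours has the minor advantage of being independent of part (1), and you handle the strict-versus-non-strict inequality in the definition of $L_1$ correctly via the $\vep$-perturbation (the degenerate case $\int_E|f|^r=0$ is harmless, since then $f\chi_E=0$ a.e.\ and $L[f\chi_E]=0$).
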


\begin{proof}
(1) We observe that
\begin{align*}
L_2[f \chi_E](\alpha |E|^{1-\gamma/n}) & = \dis \max \left(\sup_{\substack{|A|=\alpha |E|^{1-\gamma/n} \\ A \subset E}} \ei_A |f\chi_E|,\ \sup_{\substack{|A| = \alpha |E|^{1-\gamma/n} \\ A \cap E^c \not = \emptyset}} \ei_A |f \chi_E| \right) \\
& =: \max \left(X,Y \right).
\end{align*}
We verify $Y \le X$, to do this we take $A \subset \Rn$ with $|A| = \alpha |E|^{1-\gamma/n}$ and $A \cap E^c \not = \emptyset$.

\medskip

\underline{Case $|A \cap E^c| = 0$:} Because $|A\cap E| = |A| = \alpha |E|^{1-\gamma/n}$, it holds that
\begin{align*}
\ei_A |f \chi_E| & = \dis \sup \left\{\lambda>0; |\{A; |f \chi_E| < \lambda \}| =0 \right\} \\
& = \sup \left\{\lambda>0; |\{A \cap E; |f \chi_E| < \lambda \}|=0 \right\} \\
& = \ei_{A \cap E} |f| \\
& \le X.
\end{align*}

\medskip

\underline{Case $|A \cap E^c| >0$:} If $E_f := \{x \in \Rn; |f(x)| = \infty \}$, then $|E_f|=0$.
Hence, for any $\lambda \in (0,\infty)$,
\begin{align*}
\left|\{A; |f \chi_E| < \lambda \} \right| & = \left|\{A \cap E; |f \chi_E| < \lambda \} \right| + \left|\{A \cap E^c; |f \chi_E| < \lambda \} \right| \\
& =\left|\{A \cap E; |f \chi_E| < \lambda \} \right| + |\{A \cap E^c \cap E_f^c; |f \chi_E| < \lambda \} | \\
& = \left|\{A \cap E; |f \chi_E| < \lambda \} \right| + |A \cap E^c| >0.
\end{align*}
This means that $\left\{\lambda>0; |\{A; |f \chi_E| < \lambda \}| =0 \right\} = \emptyset$, and then $\ei_A |f \chi_E| =0 \le X$.

\medskip

(2) We have that for $A \subset E$ with $|A| = \alpha |E|^{1-\gamma/n}$
\[
\lan |f|^r \ran_E \ge \dfrac{|A|}{|E|} \ei_A |f|^r = \alpha |E|^{-\gamma/n} \ei_A |f|^r.
\]
This and (1) complete the proof.
\end{proof}

\vspace{5mm}

%%%%%%%%%%%%%%%%%%%%%%%%%%%%%%%%%%%%%%%%%%%%%%%%%%%%%%%%%%%%%%%
\section{Representation of $\med^{\alpha,\gamma}_E[f]$}
Here, we give a representation of $\med_E^{\alpha,\gamma}[f]$ for non-negative functions $f \in \mathbf{F}_0$.
The inclusion \llq $\subset$\rrq in Theorem \ref{express fra med} below with $\gamma=0$ was proved by Poelhuis-Torchinsky \cite{P-T}.
While our proof is very elementary, for the completeness, we give the proof. 

%%%%%%%%%%%%%%%%
\begin{thm} \label{express fra med}
Let $f \in \mathbf{F}_0,\ E \subset \Rn$ with $0<|E|<\infty,\ \gamma \in [0,n)$ and $\alpha \in (0,|E|^{\gamma/n})$.
If $f$ is non-negative, then it holds that
\[
\med^{\alpha,\gamma}_E[f] = \left[R[f \chi_E]\left(\alpha |E|^{1-\gamma/n} \right), L[f \chi_E]\left(\alpha |E|^{1-\gamma/n} \right) \right].
\]
\end{thm}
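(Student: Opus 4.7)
The plan is to set $\beta:=\alpha|E|^{1-\gamma/n}\in(0,|E|)$ and $g:=f\chi_E$, and then prove both inclusions by direct comparison with the distribution function $d_g$. Since $f\ge 0$, we have the convenient identity $d_g(\lambda)=|\{x\in E:f(x)>\lambda\}|$ for every $\lambda\ge 0$; from $f\in\mathbf{F}_0$ and $|E|<\infty$ the function $d_g$ is finite-valued, non-increasing, and right-continuous on $[0,\infty)$. The three consequences I will use repeatedly, all immediate from the definitions of $R_1$ and $L_1$ as infima together with monotonicity and right-continuity of $d_g$, are: $\lambda\ge R[g](\beta)\Rightarrow d_g(\lambda)\le\beta$; $\lambda<L[g](\beta)\Rightarrow d_g(\lambda)\ge\beta$; and $\lambda>L[g](\beta)\Rightarrow d_g(\lambda)<\beta$.

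For the inclusion $\supset$, fix $m\in[R[g](\beta),L[g](\beta)]$. The upper-bound defining condition $|\{x\in E:f(x)>m\}|\le\beta$ reads $d_g(m)\le\beta$, which holds because $m\ge R[g](\beta)$. The lower-bound defining condition is equivalent to $|\{x\in E:f(x)\ge m\}|\ge\beta$. When $m<L[g](\beta)$, this follows at once from $d_g(m)\ge\beta$. The one delicate case is $m=L[g](\beta)$: choose $\lambda_k\uparrow m$ with $\lambda_k<m$, so $d_g(\lambda_k)\ge\beta$; since the sets $\{f>\lambda_k\}\cap E$ decrease to $\{f\ge m\}\cap E$ and all have finite measure, continuity of measure from above yields $|\{f\ge m\}\cap E|=\lim_k d_g(\lambda_k)\ge\beta$.

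For $\subset$, I argue by contrapositive. If $m<R[g](\beta)$, then $d_g(m)>\beta$ directly contradicts $|\{f>m\}\cap E|\le\beta$. If $m>L[g](\beta)$, pick any $\lambda\in(L[g](\beta),m)$; then $d_g(\lambda)<\beta$, and the inclusion $\{f\ge m\}\cap E\subset\{f>\lambda\}\cap E$ forces $|\{f\ge m\}\cap E|<\beta$, which contradicts the lower-bound condition $|\{f<m\}\cap E|\le|E|-\beta$.

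The only step with genuine content is the endpoint case $m=L[g](\beta)$ in the $\supset$ direction: the limit argument there is what guarantees that the right endpoint of the displayed interval is itself a fractional median, rather than being excluded; it rests on $d_g$ being finite at each positive level, which is precisely built into the class $\mathbf{F}$. Everything else is elementary bookkeeping with the distribution function.
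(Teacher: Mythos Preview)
Your proof is correct; the edge cases $m<0$ and $L[g](\beta)=0$ are not explicitly addressed, but both are trivial for non-negative $f$ (if $m<0$ then $|\{x\in E:f(x)>m\}|=|E|>\beta$, and if $L[g](\beta)=0$ then $|\{x\in E:f(x)<0\}|=0$), so this is at most a cosmetic omission.

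Your route differs from the paper's in organization. The paper first shows abstractly that $\med^{\alpha,\gamma}_E[f]$ is a non-empty closed bounded interval, by analyzing the half-lines $A=\{m:|\{E;f<m\}|\le|E|-\beta\}$ and $B=\{m:|\{E;f>m\}|\le\beta\}$ through four steps (non-emptiness, properness, closedness, and mutual containment of endpoints), and only afterward identifies the endpoints with $R[f\chi_E](\beta)$ and $L[f\chi_E](\beta)$ via the inequality $R\le m\le L$ together with a separate verification that $R,L\in\med^{\alpha,\gamma}_E[f]$. You bypass the structural analysis of $A$ and $B$ entirely and verify both inclusions directly from the defining properties of $R_1$ and $L_1$ as infima over level sets of $d_g$. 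The computations at the core (right-continuity of $d_g$, the limit $|\{f\ge m\}|=\lim_k d_g(\lambda_k)$ for $\lambda_k\uparrow m$) are the same in both arguments, but your packaging is more economical: you never need to know in advance that $\med^{\alpha,\gamma}_E[f]$ is an interval, because that falls out of the two inclusions. The paper's approach, on the other hand, makes the interval structure of the median set visible independently of the rearrangement representation, which is conceptually informative even if logically redundant for the theorem as stated.
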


\begin{proof}
Let $A$ and $B$ be defined as follows:
\begin{align*}
A & := \left\{ m \in \R; \left|\left\{x \in E; f(x) < m \right\} \right| \le |E| - \alpha |E|^{1-\gamma/n}  \right\} \ \tn{and} \\
B &:= \left\{ m \in \R; \left|\left\{x \in E; f(x) > m \right\} \right| \le \alpha |E|^{1-\gamma/n}\right\}.
\end{align*}
It is clear that $\med_E^{\alpha, \gamma}[f] = A \cap B$. 
Then, from the definition, it obviously holds $\med_E^{\alpha, \gamma}[f] = A \cap B$.
One can see that $A$ and $B$ are the one of the forms
\[
(-\infty, a)\ \tn{or} \ (-\infty,a] \ \tn{or}\ \R \ \tn{or}\ \emptyset,
\]
and
\[
(b,\infty)\ \tn{or}\ [b,\infty) \ \tn{or}\ \R\ \tn{or}\ \emptyset,
\]
for some $a,b \in \R$, respectively.

Firstly, we show that
\begin{equation} \label{AB}
\med_E^{\alpha,\gamma}[f] = [\max A, \min B]
\end{equation}
where $\max A, \min B \in \R$.
We divide the proof into several steps.

\medskip

\ul{Step 1: $A \not = \emptyset$ and $B \not = \emptyset$.}
If $A = \emptyset$, it holds that
\[
\dis 0=|\{E; f=-\infty \}| = \left|\bigcap_{\ell \in \N} \{E; f<-\ell \} \right| = \lim_{\ell \to \infty} |\{E; f<-\ell \}| \ge |E| - \alpha |E|^{1-\gamma/n} > 0.
\]
Hence, $A \not = \emptyset$.
Similarly, if we assume $B = \emptyset$, we get the following contradiction;
\[
0 = |\{E; f=\infty \}| = \left|\bigcap_{\ell \in \N} \{E; f>\ell \} \right| = \lim_{\ell \to \infty} |\{E; f > \ell \}| \ge \alpha |E|^{1-\gamma/n} > 0.
\]

\medskip

\ul{Step 2: $A \not = \R$ and $B \not = \R$:}
If $A = \R$, one has that
\[
\dis |E| = |\{E; f< \infty \}| = \left|\bigcup_{\ell \in \N} \{E; f < \ell \} \right| = \lim_{\ell \to \infty} |\{E; f<\ell \}| \le |E| - \alpha |E|^{1-\gamma/n}.
\]
This means $\alpha \le 0$.
Then, we can see that $A = (-\infty,a)$ or $A=(-\infty,a]$ for some $a \in \R$.
Hence, $\sup A \in \R$.
Similarly, if $B=\R$, then we have the contradiction 
\[
|E| = |\{E; f >-\infty\} = \left|\bigcup_{\ell \in \N} \{E; f > -\ell \} \right| = \lim_{\ell \to \infty} |\{E; f > -\ell \}| \le \alpha |E|^{1-\gamma/n}.
\]
Thus, we have $\inf B \in \R$.

\medskip

\ul{Step 3: $\sup A \in A \ \& \ \inf B \in B$.}
These are trivial from step 2;
\[
\dis |\{E; f< \sup A \}| = \lim_{\ell \to \infty} |\{E; f < \sup A - 1/\ell\}| \le |E| - \alpha |E|^{1-\gamma/n},
\]
and
\[
\dis |\{E; f > \inf B\}| = \lim_{\ell \to \infty} |\{E; f> \inf B + 1/\ell \}| \le \alpha |E|^{1-\gamma/n}.
\]
Therefore, $\sup A = \max A \in \R$ and $\inf B = \min B \in \R$.

\medskip

\ul{Step 4: $A \cap B \not = \emptyset$.}
More precisely, we prove that $\max A \in B \ \& \ \min B \in A$, which means that $\med_E^{\alpha, \gamma}[f] = [\max A, \min B]$.

\medskip

These are easy to see as follows;
\begin{align*}
\dis |\{E; f> \max A \}| & = \left|\bigcup_{\ell \in \N} \left\{E; f \ge \max A + 1/\ell \right\} \right| \\
& = \lim_{\ell \to \infty} |\{E; f \ge \max A + 1/\ell \}| \\
& = |E| - \lim_{\ell \to \infty} |\{E; f < \max A + 1/\ell \}| \\
& \le |E| - \left(|E| - \alpha |E|^{1-\gamma/n} \right) \\
& = \alpha |E|^{1-\gamma/n}
\end{align*}
and
\begin{align*}
|\{E; |f| < \min B \}| & = \left|\bigcup_{m \in \N} \left\{E; |f| \le \min B - 1/m \right\} \right| \\
& = \dis \lim_{m \to \infty} \left|\left\{E; |f| \le \min B -1/m \right\} \right| \\
& = |E| - \lim_{m \to \infty} \left|\left\{E; |f| > \min B -1/m \right\} \right| \\
& \le |E| - \alpha |E|^{1-\gamma/n}.
\end{align*}

\medskip

Next, we show that for any $m \in \med_E^{\alpha,\gamma}[f]$,
\begin{equation} \label{RmL}
R[f\chi_E](\alpha |E|^{1-\gamma/n}) \le m \le L[f \chi_E](\alpha |E|^{1-\gamma/n}).
\end{equation}
We should emphasize that this inequality in the case $\gamma=0$ were proved by Poelhuis-Torchinsky \cite{P-T}.
For simplicity, we write $R:= R[f\chi_E](\alpha |E|^{1-\gamma/n})$ and $L:=L[f \chi_E](\alpha |E|^{1-\gamma/n})$.
Since
\[
d_{f \chi_E}(m) = \left|\left\{E; f > m \right\}\right| \le \alpha |E|^{1-\gamma/n},
\]
we see $R \le m$.
To show the second inequality, we see for any $\vep>0$ that
\[
|\{E; |f| \le L+\vep \}| = |E| - d_{f \chi_E}(L+\vep) > |E| - \alpha |E|^{1-\gamma/n}.
\]
Now we assume that $L+\vep < m$, then one has the contradiction as follows:
\[
|E| - \alpha |E|^{1-\gamma/n} < |\{E; |f| \le L+\vep \}| \le |\{E; |f| < m \}| \le |E| - \alpha |E|^{1-\gamma/n}.
\]
Thus, $m \le L$.
We have proved that
\[
\med_E^{\alpha,\gamma}[f] = [\max A, \min B] \subset \left[R[f\chi_E](\alpha |E|^{1-\gamma/n}), L[f \chi_E](\alpha |E|^{1-\gamma/n}) \right].
\]

\medskip

Finally, we show that $R, L \in \med_E^{\alpha, \gamma}[f]$ by using the assumption $f \in \mathbf{F}_0$.
On one hand, it holds that
\begin{align*}
\left| \left\{E; f < R \right\} \right| & \le \left| \left\{E; f < L \right\} \right| = \left|\bigcup_{m=1}^\infty \left\{E; f \le L - \frac{1}{m} \right\} \right| \\
& = |E| - \lim_{m \to \infty} \left|\left\{E; f > L - \frac{1}{m} \right\} \right| \\
& = |E| - \lim_{m \to \infty} d_{f\chi_E} \left(L - \frac{1}{m} \right) \le |E| - \alpha |E|^{1-\gamma/n},
\end{align*}
thus $R,L \in A$.
On the other hand, one has
\begin{align*}
\left| \left\{E; f > L \right\} \right| & \le \left| \left\{E; f > R \right\} \right| = \left|\bigcup_{m=1}^\infty \left\{E; f> R + \frac{1}{m} \right\} \right| \\
& = \lim_{m \to \infty} d_{f\chi_E} \left(R+\frac{1}{m} \right) \le \alpha |E|^{1-\gamma/n},
\end{align*}
thus $R, L \in B$.
\end{proof}

\vspace{5mm}

%%%%%%%%%%%%%%%%%%%%%%%%%%%%%%%%%%%%%%%%%%%%%%%%%%%%%%%%%%%%%%%%
\section{Fractional maximal operators}
In this section, let $\Omega$ be a non-empty open subset in $\Rn$.
The purpose of this section is to define fractional maximal operators with non-increasing rearrangements $R$ and $L$, and to give mapping properties of them.

\medskip

%%%%%%%%%
\begin{Def} \label{frac max}
Let $f \in \mathbf{F},\ \alpha \in (0,\infty)$ and $\gamma \in [0,n)$.
We define the fractional maximal operators
\[
\begin{cases}
\; m_{\alpha, \Omega, \gamma}^R[f](x) := \dis \sup_{Q \subset \Omega} R[f \chi_Q]\left(\alpha |Q|^{1-\gamma/n} \right) \chi_Q(x)\ \tn{and} \\
\; m_{\alpha, \Omega, \gamma}^L[f](x) := \dis \sup_{Q \subset \Omega} L[f \chi_Q]\left(\alpha |Q|^{1-\gamma/n} \right) \chi_Q(x).
\end{cases}
\]
In the case $\Omega=\Rn$ or $\gamma=0$, we omit them.
\end{Def}

\medskip

%%%%%%%
\begin{rem} \label{max fra rem}
\begin{enumerate}
\item
The maximal operators $m^R_{1/2}$ and $m^L_{1/2}$ were introduced by Strömberg \cite{St}.

\item
It follows from Propositions \ref{pro pointwise} and \ref{L} that
\[
m^R_{\alpha, \Omega, \gamma}[f](x) \le m^L_{\alpha, \Omega, \gamma}[f](x) \le \inf_{r \in (0,\infty)} \left(\alpha^{-1/r} M^\Omega_\gamma (|f|^r)(x)^{1/r} \right),
\]
where $M^\Omega_\gamma$ is the fractional maximal operator, defined by
\[
M^\Omega_\gamma f(x) := \dis \sup_{Q \subset \Omega} |Q|^{\gamma/n} \lan f \ran_Q \chi_Q(x).
\]
We write $M_\gamma = M^{\Rn}_\gamma$.

\item
The fractional maximal operator $M^\Omega_\gamma$ is written by the rearrangements; for $N= R$ or $L$,
\[
M^\Omega_\gamma f(x) = \sup_{Q \subset \Omega} \int_0^{|Q|^{\gamma/n}} N[f \chi_Q] (\alpha |Q|^{1-\gamma/n}) d\alpha \chi_Q(x).
\]

\item
In fact, the suprema in Definition \ref{frac max} are restricted to cubes whose volume is larger than $\alpha^{n/\gamma}$.
For $\alpha \ge |Q|^{\gamma/n}$,
\[
m_{\alpha, \Omega, \gamma}^R[f](x) = m_{\alpha, \Omega, \gamma}^L[f](x) = 0,\ (x \in \Rn)
\]
because
\begin{align*}
R[f \chi_Q](\alpha |Q|^{1-\gamma/n}) & \le R[f \chi_Q](|Q|) = 0,\ \tn{and} \\
L[f \chi_Q](\alpha |Q|^{1-\gamma/n}) & \le L[f \chi_Q](|Q|) = 0. \\
\end{align*}

\item
Fujii \cite{F2} in the case $\alpha=1/2$ and Poelhuis-Torchinsky \cite{P-T} in general case proved that for a real-valued, finite a.e. measurable function $f$ on $\Rn$ and $\alpha \in (0,1)$, it holds that
\[
\lim_{x \in Q_j,\ Q_j \searrow \{x\}} m_{Q_j} = f(x) \ \tn{a.e.}\ x \in \Rn
\]
with any $m_{Q_j} \in \med_{Q_j}^{\alpha,0}[f]$.
From this result, we see that
\begin{equation} \label{pointwise}
|f(x)| \le m^R_{\alpha}[f](x)\ \tn{for almost all}\ x \in \Rn.
\end{equation}
This is applied in Section 5.

\item
We can define the maximal operator via fractional medians similar to that above.
But the maximal operator is not unique, because these are many choices of fractional medians for each cubes.
Moreover, they are controlled from both below and above by the rearrangements.
\end{enumerate}
\end{rem}

\medskip

%%%%%%%%%%%%%%%%%%%%%%%%%%%%%%%
\subsection{Boundedness of the fractional maximal operators}
In this subsection, we prove the following bounds.
%%%%%%%
\begin{thm} \label{maximal}
Let $p \in (0,\infty),\ q \in (0,\infty],\ \gamma \in [0,n),\ r \in [1,n/\gamma)$ and $1/r - 1/\wti{r} = \gamma/n$.
Then
\[
\left\|m^L_{\alpha, \gamma} \right\|_{L^{p,q} \to L^{\wti{p},q}} \le \left(\dfrac{\|M_\gamma\|_{L^{r,1} \to L^{\wti{r},\infty}}}{\alpha} \right)^{r/p},
\]
where $\wti{p} = (p\wti{r}/r)$, equivalently
\[
\dfrac{1}{p} - \dfrac{1}{\wti{p}} = \dfrac{r}{p} \dfrac{\gamma}{n}.
\]
\end{thm}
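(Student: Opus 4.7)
The remark in the introduction (``The proof follows from a pointwise estimate of the distribution function'') indicates the route: reduce the bound for $m^L_{\alpha,\gamma}$ to a level-set comparison with $M_\gamma$ applied to the characteristic function $\chi_{\{|f|>\lambda\}}$, then insert the given weak-type bound of $M_\gamma$, and finally convert the resulting distributional inequality into a Lorentz-norm bound. Concretely, I would first establish the pointwise inclusion
\[
\{x\in\Rn : m^L_{\alpha,\gamma}[f](x) > \lambda\} \ \subset\ \{x\in\Rn : M_\gamma(\chi_{\{|f|>\lambda\}})(x) \ge \alpha\},
\qquad \lambda>0.
\]

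To prove this inclusion, suppose $m^L_{\alpha,\gamma}[f](x)>\lambda$. By definition there is a cube $Q\ni x$ with $L[f\chi_Q](\alpha|Q|^{1-\gamma/n})>\lambda$. Unwinding $L=L_1$ (and using monotonicity of $d_{f\chi_Q}$ to pass from $\lambda+\vep$ back to $\lambda$, exploiting left-continuity of $L_1$), this strict inequality forces $d_{f\chi_Q}(\lambda)\ge \alpha|Q|^{1-\gamma/n}$, i.e.\ $|\{|f|>\lambda\}\cap Q|\ge \alpha|Q|^{1-\gamma/n}$. Dividing by $|Q|^{1-\gamma/n}$ gives $|Q|^{\gamma/n}\lan\chi_{\{|f|>\lambda\}}\ran_Q\ge \alpha$, hence $M_\gamma(\chi_{\{|f|>\lambda\}})(x)\ge \alpha$, as claimed. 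Taking measures yields
\[
d_{m^L_{\alpha,\gamma}[f]}(\lambda) \ \le\ |\{M_\gamma(\chi_{\{|f|>\lambda\}})\ge \alpha\}|.
\]

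Next I would apply the weak-type hypothesis $M_\gamma:L^{r,1}\to L^{\wti{r},\infty}$ with the test function $g=\chi_{\{|f|>\lambda\}}$, for which $\|g\|_{L^{r,1}}$ is comparable to $d_f(\lambda)^{1/r}$ (the comparison constant depending only on the normalization of the Lorentz quasi-norm). This gives
\[
d_{m^L_{\alpha,\gamma}[f]}(\lambda)
\ \le\
\Bigl(\tfrac{\|M_\gamma\|_{L^{r,1}\to L^{\wti{r},\infty}}}{\alpha}\Bigr)^{\wti{r}}\,d_f(\lambda)^{\wti{r}/r}.
\]
Raising to the $1/\wti{p}$-power and using the algebraic identity $\wti{r}/\wti{p}=r/p$ (equivalent to the stated relation $\wti{p}=p\wti{r}/r$) transforms this into
\[
\lambda\,d_{m^L_{\alpha,\gamma}[f]}(\lambda)^{1/\wti{p}}
\ \le\
\Bigl(\tfrac{\|M_\gamma\|_{L^{r,1}\to L^{\wti{r},\infty}}}{\alpha}\Bigr)^{r/p}\,\lambda\,d_f(\lambda)^{1/p}.
\]
Taking the $L^q(d\lambda/\lambda)$ (or $L^\infty$) norm of both sides recovers exactly the claimed bound
\[
\|m^L_{\alpha,\gamma}[f]\|_{L^{\wti{p},q}}
\ \le\
\Bigl(\tfrac{\|M_\gamma\|_{L^{r,1}\to L^{\wti{r},\infty}}}{\alpha}\Bigr)^{r/p}\|f\|_{L^{p,q}}.
\]

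The main obstacle is the very first step: translating the strict inequality $L[f\chi_Q](\alpha|Q|^{1-\gamma/n})>\lambda$ into the distributional estimate $d_{f\chi_Q}(\lambda)\ge \alpha|Q|^{1-\gamma/n}$. This is where the left-continuity of $L_1$ and the ``$<$'' in its defining infimum really matter, and where a small $\vep$-shift must be used to avoid endpoint artifacts; everything downstream is routine once this pointwise comparison with the classical fractional maximal operator $M_\gamma$ is in place. A secondary, purely bookkeeping, point is to absorb the constant $\|\chi_E\|_{L^{r,1}}/|E|^{1/r}$ arising from the chosen normalization of the Lorentz quasi-norm so that the final constant is exactly $(\|M_\gamma\|_{L^{r,1}\to L^{\wti{r},\infty}}/\alpha)^{r/p}$.
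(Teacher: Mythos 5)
Your proposal is correct and follows essentially the same route as the paper: the key inclusion $\{m^L_{\alpha,\gamma}[f]>\lambda\}\subset\{M_\gamma(\chi_{\{|f|>\lambda\}})\ge\alpha\}$ is exactly part (2) of the paper's Lemma \ref{mainlem fra}, and the subsequent application of the weak-type bound for $M_\gamma$ to $\chi_{\{|f|>\lambda\}}$ followed by integration in $\lambda$ using $\wti{r}/\wti{p}=r/p$ is the paper's proof of Theorem \ref{maximal} verbatim. The only difference is cosmetic: the paper passes through $\lim_{\vep\to0}d_{M_\gamma(\chi_{\{|f|>\lambda\}})}(\alpha-\vep)$ where you absorb the closed-versus-open level-set issue directly, and your remark on the normalization constant $\|\chi_E\|_{L^{r,1}}/|E|^{1/r}$ is a fair observation about a point the paper glosses over.
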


\medskip

%%%%%%%
\begin{rem} \label{rem bdd}
Because $r$ is independent of $p$ and $q$, Theorem \ref{maximal} implies that $m^L_{\alpha,\gamma}: L^{p,q} \to L^{\wti{p},q}$ for all $\wti{p} \in [np/(n-\gamma), \infty)$.
Here, we note that $\wti{p}$ is not fixed, which contrasts with the case for $M_\gamma$.
This phenomenon can be understood from 2 in Remark \ref{max fra rem}.
Since $\wti{p}$ can be larger than $np/(n-\gamma)$, the operator $m^L_{\alpha,\gamma}$ is smoother than $M_\gamma$.
\end{rem}

\medskip

The ingredient of the proof is an interesting equality or inclusions between the (strict) super-level sets of $m^R_{\alpha, \gamma}[f]$ and $M_\gamma(\chi_{\{\Omega; |f|>\lambda\}})$.

%%%%%%%
\begin{lem} \label{mainlem fra}
Let $f \in \mathbf{F}_0,\ \gamma \in [0,n),\ \alpha >0$ and $\lambda > 0$.
Then,\par
(1)
\[
\dis \left\{x \in \Omega; m_{\alpha,\Omega, \gamma}^R[f](x) > \lambda \right\} = \left\{x \in \Omega; M^\Omega_\gamma \left(\chi_{\{\Omega; |f| > \lambda\}} \right)(x) > \alpha \right\}.
\]

\medskip

(2)
\begin{align*}
\left\{x \in \Omega; m^L_{\alpha,\Omega, \gamma}[f](x) > \lambda \right\} & \subset \left\{x \in \Omega; M^\Omega_\gamma(\chi_{\{\Omega; |f| > \lambda \} })(x) \ge \alpha \right\} \\
& = \bigcap_{\vep>0} \left\{x \in \Omega; m^R_{\alpha-\vep,\Omega,\gamma}[f](x) > \lambda \right\} \\
& = \bigcap_{\vep>0} \left\{x \in \Omega; m^L_{\alpha-\vep,\Omega,\gamma}[f](x) > \lambda \right\}.
\end{align*}
\end{lem}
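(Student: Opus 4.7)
My strategy rests on two elementary pointwise identities, valid for $g\in\mathbf{F}$ and $t,\lambda\ge 0$:
\[
R[g](t)>\lambda \iff d_g(\lambda)>t, \qquad L[g](t)>\lambda \iff d_g(\lambda)\ge t.
\]
Both follow in one line from the definitions in Section 2.1 together with the monotonicity of $d_g$: $R[g](t)>\lambda$ means no $\mu\le\lambda$ lies in $\{\mu\ge 0:d_g(\mu)\le t\}$, which by monotonicity of $d_g$ reduces to $d_g(\lambda)>t$; the $L$-version is identical with $\le$ replaced by $<$.  The asymmetry ``$>$ versus $\ge$'' is precisely what will make (1) an equality and (2) only an inclusion.

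For (1), I unfold the definition of $m^R_{\alpha,\Omega,\gamma}[f]$: a point $x$ lies in $\{m^R_{\alpha,\Omega,\gamma}[f]>\lambda\}$ iff some cube $Q$ with $x\in Q\subset\Omega$ satisfies $R[f\chi_Q](\alpha|Q|^{1-\gamma/n})>\lambda$.  By the $R$-identity applied to $g=f\chi_Q$, this is equivalent to $|\{y\in Q:|f(y)|>\lambda\}|>\alpha|Q|^{1-\gamma/n}$, i.e.\ $|Q|^{\gamma/n}\lan \chi_{\{\Omega;|f|>\lambda\}}\ran_Q>\alpha$.  Taking the supremum over admissible $Q$ yields the set equality with $\{M^\Omega_\gamma(\chi_{\{\Omega;|f|>\lambda\}})>\alpha\}$, because every step was an equivalence.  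The inclusion in (2) is proved by the same translation with the $L$-identity; that identity now only delivers the weak inequality $|\{y\in Q:|f(y)|>\lambda\}|\ge\alpha|Q|^{1-\gamma/n}$, which after passing to the supremum gives $M^\Omega_\gamma(\chi_{\{\Omega;|f|>\lambda\}})(x)\ge\alpha$.

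The first equality in the chain of (2) follows from
\[
\{M^\Omega_\gamma(\chi_{\{\Omega;|f|>\lambda\}})\ge\alpha\}=\bigcap_{\vep>0}\{M^\Omega_\gamma(\chi_{\{\Omega;|f|>\lambda\}})>\alpha-\vep\},
\]
combined with part (1) applied at each $\alpha-\vep\in(0,\alpha)$.  The second equality splits into two directions.  The inclusion $\bigcap_{\vep>0}\{m^R_{\alpha-\vep,\Omega,\gamma}[f]>\lambda\}\subset\bigcap_{\vep>0}\{m^L_{\alpha-\vep,\Omega,\gamma}[f]>\lambda\}$ is immediate from $R\le L$ pointwise (Proposition 2.1), hence $m^R_{\alpha-\vep,\Omega,\gamma}\le m^L_{\alpha-\vep,\Omega,\gamma}$.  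The reverse inclusion I close by chaining the inclusion of (2) with the first equality: if $m^L_{\alpha-\vep,\Omega,\gamma}[f](x)>\lambda$ for every $\vep>0$, then $M^\Omega_\gamma(\chi_{\{\Omega;|f|>\lambda\}})(x)\ge\alpha-\vep$ for every $\vep>0$, hence $\ge\alpha$, hence $>\alpha-\vep'$ for every $\vep'>0$, and part (1) converts this back into $x\in\bigcap_{\vep'>0}\{m^R_{\alpha-\vep',\Omega,\gamma}[f]>\lambda\}$.

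The main obstacle is purely bookkeeping: keeping straight which of the strict and non-strict inequalities among $R$, $L$, $d_{f\chi_Q}$, $M^\Omega_\gamma$ and $\alpha$ applies at each step.  This asymmetry is the entire content of the lemma, and is also why the $\bigcap_{\vep>0}$ and the auxiliary bound $R\le L$ have to be introduced in order to close the chain of equalities in (2).
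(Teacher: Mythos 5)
Your proof is correct and follows essentially the same route as the paper's: both rest on translating $R[f\chi_Q](t)>\lambda$ into $d_{f\chi_Q}(\lambda)>t$ (and the $L$-analogue with $\ge$) and then reading off the super-level sets of $M^\Omega_\gamma$; your derivation of the two equalities in (2) from part (1), the set identity $\{M^\Omega_\gamma(\chi)\ge\alpha\}=\bigcap_{\vep>0}\{M^\Omega_\gamma(\chi)>\alpha-\vep\}$, and the pointwise bound $m^R\le m^L$ is a clean way of making precise what the paper dispatches with ``the same argument.'' Two small caveats. First, the backward implication $d_g(\lambda)>t\Rightarrow R[g](t)>\lambda$ needs the right continuity of $d_g$ (equivalently, that the infimum defining $R_1$ is attained, which holds here since $f\chi_Q\in\mathbf{F}_0$), not monotonicity alone: without it, ``no $\mu\le\lambda$ lies in $\{\mu: d_g(\mu)\le t\}$'' only yields $R[g](t)\ge\lambda$. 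Second, your stated biconditional $L[g](t)>\lambda\iff d_g(\lambda)\ge t$ is false from right to left (if $d_g$ decreases continuously through the value $t$ at $\lambda$, then $d_g(\lambda)=t$ but $L[g](t)=\lambda$); fortunately you only ever invoke the left-to-right direction, and this failure is precisely why (2) is an inclusion rather than an equality.
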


\medskip

%%%%%%
\begin{rem} \label{rem inclusions}
\begin{enumerate}
\item
From (\ref{pointwise}), (1) in Lemma \ref{mainlem fra} and (2) in Proposition \ref{L}, we have
\begin{align*}
d_f(\lambda) & \le d_{m^R_{\alpha}[f]}(\lambda) = d_{M(\chi_{|f| > \lambda})}(\alpha) \\
& \le \inf_{p \in [1,\infty)} \left(\dfrac{\|M\|_{L^{p,1} \to L^{p,\infty}}}{\alpha} \|\chi_{|f| > \lambda}\|_{L^{p,1}} \right)^p \le c d_f(\lambda).
\end{align*}
This implies that $\|f\|_{L^{p,q}} \approx \|m^R_{\alpha}[f]\|_{L^{p,q}}$ for all $p,q$.
A similar equivalence holds for $m^L_{\alpha}$ as shown in (2).
The inequalities above mean that $m^R_{\alpha}[f]$ is \llq almost\rrq equimeasurable with $f$.
However, the operator $m^R_\alpha$ connects $f$ to its super-level set in terms of the distribution function.
The author does not know of any pointwise inequality between $f$ and $M(\chi_{\{|f| > \lambda \}})$.
Although $M(\chi_{\{|f|>\lambda \}}) \le 1$, $f$ can be large.

\item
We note that the roles of $\alpha$ and $\gamma$ are swapped in (1) and (2) in Lemma \ref{mainlem fra} below.
The equality in (1) of Lemma \ref{mainlem fra} for the case $\gamma=0$ is found in many literatures, and the proof for other cases is almost same.
For the rearrangement $L$, although such equality does not hold, the super-level sets are equivalent in some sense.
We will apply (1) and (2) in next section
\end{enumerate}
\end{rem}

\medskip

\begin{proof}
(1) We shall show the inclusion \llq$\subset$\rrq.
The opposite inclusion can be seen by tracing the argument below backwards.
If $m^R_{\alpha, \Omega, \gamma}[f](x) > \lambda$, then there exists $\Omega \supset Q \ni x$ so that $R[f \chi_Q](\alpha |Q|^{1-\gamma/n}) > \lambda$.
(Here, the cube $Q$ must satisfies $\alpha < |Q|^{\gamma/n}$. See Remark \ref{max fra rem}.)
From the definition of the rearrangement $R=R_1$, it follows $d_{f \chi_Q}(\lambda) > \alpha |Q|^{1-\gamma/n}$.
Thus
\[
|Q|^{\gamma/n} \dfrac{\int_Q \chi_{\{|f| > \lambda \}} dx}{|Q|} > \alpha,
\]
which means that $M_\gamma (\chi_{\{\Omega; |f| > \lambda \}})(x) > \alpha$.

\medskip

(2) The first inclusion follows from the same argument in (1).
To show the first equality, we take $x$ so that $M_\gamma(\chi_{\{\Omega; |f| > \lambda \}})(x) \ge \alpha$.
Then, for any $\vep>0$ there is $x \in Q_\vep \subset \Omega$, for which $|Q_\vep|^{\gamma/n-1} \int_{Q_\vep} \chi_{\{\Omega; |f| > \lambda \}} dx > \alpha - \vep$.
This is same as the inequality $d_{f \chi_{\{\Omega; |f| > \lambda \}}}(\lambda) > (\alpha - \vep) |Q_\vep|^{1-\gamma/n}$, thus
\[
\lambda < R\left[f \chi_{\{\Omega; |f| > \lambda \}} \right]\left((\alpha-\vep) |Q_\vep|^{1-\gamma/n} \right) \le m^R_{\alpha-\vep,\gamma}[f](x).
\]
To trace this argument backwards, the opposite inclusion can be seen.
The same argument verifies the second equality.
\end{proof}

\medskip

We give the proof of Theorem \ref{maximal}.
\begin{proof}
It is easy to see that
\[
d_{m^L_{\alpha,\gamma}[f]}(\lambda) \le \left(\dfrac{\|M_\gamma\|_{L^{r,1} \to L^{\tilde{r},\infty}}}{\alpha} \right)^{\tilde{r}} d_f(\lambda)^{\tilde{r}/r}.
\]
Indeed, from (2) in Lemma \ref{mainlem fra}, we have
\begin{align*}
d_{m^L_{\alpha,\gamma}[f]}(\lambda) & \le \lim_{\vep \to 0} d_{M_\gamma(\chi_{\{|f| > \lambda \}})}(\alpha - \vep) \\
& \le \alpha^{-\tilde{r}} \|M_\gamma \|^{\tilde{r}}_{L^{r,1} \to L^{\tilde{r},\infty}} \|\chi_{\{|f| > \lambda \}}\|_{L^{r,1}}^{\tilde{r}} \\
& = \alpha^{-\tilde{r}} \|M\|^{\tilde{r}}_{L^{r,1} \to L^{\tilde{r},\infty}} d_f(\lambda)^{\tilde{r}/r}.
\end{align*}
Therefore, in the case $q<\infty$,
\begin{align*}
\left\|m^L_{\alpha, \gamma}[f] \right\|_{L^{\wti{p},q}} & = \left(\wti{p} \int_0^\infty \left(\lambda d_{m^L_{\alpha,\gamma}[f]}(\lambda)^{1/\wti{p}} \right)^q \dfrac{d\lambda}{\lambda} \right)^{1/q} \\
& \le \left(\dfrac{\wti{p}}{p} \right)^{1/q} \left(\dfrac{\|M_\gamma\|_{L^{r,1} \to L^{\tilde{r},\infty}}}{\alpha} \right)^{\tilde{r}/\wti{p}} \|f\|_{L^{p,q}}.
\end{align*}
The case $q=\infty$ is similar.
\end{proof}

\vspace{5mm}

%%%%%%%%%%%%%%%%%%%%%%%%%
\section{Another proof of Alvino's embedding $BV \hookrightarrow L^{n/(n-1),1}$}
We shall give a proof of Alvino's embedding $BV \hookrightarrow L^{n/(n-1),1}$ by using medians.

\medskip

The Sobolev embedding $W^{1,1} \hookrightarrow L^{n/(n-1)},\ (n \ge 2)$ is well-known.
This result was improved by Alvino \cite{A} who showed that $BV \hookrightarrow L^{n/(n-1),1}$, where $BV$ denotes the space of all functions of bounded variation.
%%%%%%%
\begin{Def}
For $f \in L^1_{loc}$, we define
\[
|f|_{BV} := \dis \sup \left\{\int f \tn{div} \Phi dx; \Phi \in C^\infty_0(\Rn; \Rn) \ \tn{with}\ \|\Phi\|_{L^\infty} \le 1 \right\},
\]
and $\|f\|_{BV} := \|f\|_{L^1} + |f|_{BV}$.
A subset $E \subset \Rn$ is called a Cacciopolli set if $|\chi_E|_{BV} < \infty$.
\end{Def}

\medskip

%%%%%%
\begin{rem}
\begin{enumerate}
\item
While $W^{1,1} \subsetneq BV$, it holds that $|f|_{BV} = \|\nabla f\|_{L^1}$ for $f \in W^{1,1}$.

\item
We apply the coarea formula for functions in $BV$ in Subsection 4.2;
\begin{equation} \label{coarea}
|f|_{BV} = \int_\R |\chi_{E_\lambda}|_{BV} d\lambda,
\end{equation}
where $E_\lambda := \{x \in \Rn; f(x) > \lambda\}$.
This formula is found in several literatures, for example Evans-Gariepy \cite{E-G}.
\end{enumerate}
\end{rem}

\medskip

We shall show that for $n \ge 2,\ \gamma \in (0,n-1)$ and $f \in BV$,
\begin{equation} \label{Alvino}
\left\|m^L_{\alpha,\gamma}[f] \right\|_{L^{n/(n-(1+\gamma)),1}} \lesssim \|f\|_{L^{n/(n-1),1}} \lesssim |f|_{BV},
\end{equation}
where for the second inequality we assume that $f$ is non-negative.
Spector \cite{Spe} proved more strong inequalities.

\medskip

\textit{Proof of \eqref{Alvino}:}
Since
\[
\dfrac{n-1}{n} - \dfrac{n-(1+\gamma)}{n} = \dfrac{\gamma}{n},
\]
the first inequality follows from Theorem \ref{maximal} with $r=p=n/(n-1)$.

\medskip

We shall show the second inequality.
From 1 of Remark \ref{rem inclusions}, we have that
\begin{align*}
\|f\|_{L^{n/(n-1),1}} & \approx \left\| m^R_{\alpha}[f] \right\|_{L^{n/(n-1),1}} = \int_0^\infty d_{M(\chi_{\{f > \lambda\}})}(\alpha)^{(n-1)/n} d\lambda \\
& \le \alpha^{-1} \int_0^\infty \left\|M(\chi_{\{f >\lambda \}})\right\|_{L^{n/(n-1),\infty}} d\lambda.
\end{align*}
Since $\chi_{\{f>\lambda \}} \in L^{n/(n-1)}$ and $M$ is bounded on $L^{n/(n-1),\infty}$, we have
\[
\left\|M(\chi_{\{f >\lambda \}})\right\|_{L^{n/(n-1),\infty}} = \dis \lim_{\vep \to 0} \left\|M(\vphi_\vep \ast \chi_{\{f >\lambda \}})\right\|_{L^{n/(n-1),\infty}},
\]
where $\vphi_\vep$ is the standard mollifier.
Next, we use a standard equality; for $g \in C^1(\Rn)$ satisfying $g(x) \to 0$ as $|x| \to \infty$
\[
g(x) = \dfrac{1}{|\SP^{n-1}|} \int_\Rn \dfrac{(x-y)}{|x-y|^n} \cdot \nabla g(y) dy,
\]
which implies $|g| \lesssim I_1(|\nabla g|)$ where $I_\gamma g(x) := g \ast |\cdot|^{-(n-\gamma)}(x)$.
Combining with an interesting pointwise bound from Hong-Moen-P\'erez \cite{H-M-P}
\begin{equation} \label{MI0}
Mg(x) \lesssim MI_1(|\nabla g|)(x) \lesssim I_1(|\nabla g|)(x),
\end{equation}
one obtains that from $I_1: L^1 \to L^{n/(n-1),\infty}$,
\begin{align*}
\|f\|_{L^{n/(n-1),1}} & \lesssim \int_0^\infty \limsup_{\vep \to 0} \left\|I_1 \left(\left|\nabla \left(\vphi_\vep \ast \chi_{\{f > \lambda \}} \right) \right| \right) \right\|_{L^{n/(n-1),\infty}} d\lambda \\
& \lesssim \int_0^\infty \limsup_{\vep \to 0} \left|\vphi_\vep \ast \chi_{\{f > \lambda \}} \right|_{BV} d\lambda.
\end{align*}
The convergence of the mollifier in $|\cdot|_{BV}$, see Remark 1.16 in Giusti \cite{Gu}, and the coarea formula (\ref{coarea}) complete the proof;
\[
\|f\|_{L^{n/(n-1),1}} \lesssim \int_0^\infty |\chi_{\{f>\lambda \}}|_{BV} d\lambda \le \int_\R |\chi_{\{f>\lambda \}}|_{BV} d\lambda = |f|_{BV}.
\]
\qed

\medskip

As mentioned in Introduction, our argument is covered by the result $\|I_\gamma \nabla f \|_{L^{n/(n-\gamma),1}(\Rn;\Rn)} \lesssim \|\nabla f \|_{L^1(\Rn;\Rn)}$ from Spector \cite{Spe}.

\vspace{5mm}

%%%%%%%%%%%%%%%%%%%%%%%%%%%%%%%%%%%%%%%%%%%%%%%%%%%%%%%%%%%%%%%%
\section{Appendix: Left continuity and equimeasurability of $L$}
Here, we prove that the left continuity of $L[f]$ and the equimeasurability of $L[f]$ with $f$.
%%%%
\begin{pro} \label{left conti}
Let $f \in \mathbf{F}$ and $t \in (0,\infty)$.
If $L[f](t) < \infty$, then $L[f]$ is left continuous at $t$.
\end{pro}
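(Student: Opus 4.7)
The plan is to exploit the monotonicity of $L[f]$ together with the strict inequality in the definition of $L_1[f](t)$. First I would note that $L[f]$ is non-increasing: if $s \le t$, then $\{\lambda \ge 0 : d_f(\lambda) < s\} \subset \{\lambda \ge 0 : d_f(\lambda) < t\}$, so the infimum on the left is at least the infimum on the right. Consequently the one-sided limit $\ell := \lim_{s \to t^-} L[f](s)$ exists in $[L[f](t), \infty]$, and it only remains to establish the reverse bound $\ell \le L[f](t)$.

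For this, fix $\vep > 0$. Since $L[f](t) < \infty$, the set $\{\mu \ge 0 : d_f(\mu) < t\}$ is non-empty, and by the definition of infimum there is a $\lambda \ge 0$ with $d_f(\lambda) < t$ and $\lambda < L[f](t) + \vep$. Set $s_0 := d_f(\lambda)$, so $s_0 < t$. Then for every $s$ with $s_0 < s \le t$ we have $d_f(\lambda) = s_0 < s$, so $\lambda$ lies in $\{\mu \ge 0 : d_f(\mu) < s\}$. This yields $L[f](s) \le \lambda < L[f](t) + \vep$ throughout $(s_0, t]$. Letting $s \to t^-$ gives $\ell \le L[f](t) + \vep$, and since $\vep > 0$ was arbitrary, $\ell = L[f](t)$.

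The only delicate point is recognizing why the strict inequality in the definition of $L_1$ is essential: it produces the positive gap $t - s_0 > 0$ that one can propagate along a sequence $s \nearrow t$. This is precisely the asymmetry that makes $R_1[f]$ right continuous while $L_1[f]$ is left continuous; indeed, if the definition used $d_f(\lambda) \le t$ instead, the witness $\lambda$ would only give $s_0 \le t$ and the argument would break down. The hypothesis $L[f](t) < \infty$ is used purely to guarantee non-emptiness of the infimum set, so that a witness $\lambda$ can actually be selected. I do not anticipate any further obstacle.
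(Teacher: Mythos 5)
Your proof is correct and follows essentially the same route as the paper: both arguments extract from the definition of the infimum a witness $\lambda < L[f](t)+\vep$ with $d_f(\lambda)<t$, and use the strictness of that inequality to find a whole left neighborhood of $t$ on which $L[f](s)\le L[f](t)+\vep$. Your version merely makes the monotonicity of $L[f]$ and the existence of the left limit explicit, which the paper leaves implicit.
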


\begin{proof}
From the definition of $L_1[f]$, we see that $d_f(L_1[f](t) + \vep) < t$ for any $\vep >0$.
Thus, there is $\delta \in (0,t)$ so that
\[
d_f(L_1[f](t) + \vep) < t - \delta.
\]
This means that $L_1[f](t - \delta) \le L_1[f](t) + \vep$.
\end{proof}

\medskip

%%%%
\begin{pro}
If $f \in \mathbf{F}$, then $L[f]$ is equimeasurable with $f$;
\[
d_f(\lambda) = d_{L[f]}(\lambda)\ \tnormal{for}\ \lambda \in (0,\infty).
\]
\end{pro}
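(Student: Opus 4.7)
The plan is to show that the set $\{t > 0 : L[f](t) > \lambda\}$ is sandwiched between two half-open intervals that differ only at the endpoint $d_f(\lambda)$, and then to invoke the finiteness $d_f(\lambda)<\infty$ (which holds for $\lambda>0$ because $f \in \mathbf{F}$) to conclude that this set has Lebesgue measure exactly $d_f(\lambda)$. Concretely, I aim to establish the two inclusions
\[
(0, d_f(\lambda)) \;\subset\; \{t > 0 : L[f](t) > \lambda\} \;\subset\; (0, d_f(\lambda)].
\]

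For the second inclusion, the argument is pure unwinding of definitions: if $L[f](t) > \lambda$ then $\lambda < \inf\{\mu \ge 0 : d_f(\mu) < t\}$, so $\lambda$ itself fails the defining condition, i.e.\ $d_f(\lambda) \ge t$. For the first inclusion, fix $t \in (0, d_f(\lambda))$. Since $d_f$ is right continuous on $(0, \infty)$ (as recalled in the paper), $d_f(\lambda) > t$ forces the existence of $\varepsilon > 0$ such that $d_f(\mu) > t$ for every $\mu \in [\lambda, \lambda+\varepsilon)$; combining this with the monotonicity $d_f(\mu) \ge d_f(\lambda) > t$ for $\mu \le \lambda$, we get $[0, \lambda+\varepsilon) \cap \{d_f < t\} = \emptyset$, whence $L[f](t) \ge \lambda + \varepsilon > \lambda$.

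Once both inclusions are in place, the conclusion is immediate: both $(0,d_f(\lambda))$ and $(0,d_f(\lambda)]$ have one-dimensional Lebesgue measure equal to $d_f(\lambda)$, so $d_{L[f]}(\lambda) = d_f(\lambda)$.

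The only genuine subtlety, and the place where care is required, is the strict inequality $L[f](t) > \lambda$ (rather than $\ge \lambda$) in the first inclusion. This is precisely where right continuity of $d_f$ enters: it upgrades the pointwise inequality $d_f(\lambda) > t$ to an inequality that persists on a right neighborhood of $\lambda$, which is what gives the strict gap between $\lambda$ and $\inf\{\mu : d_f(\mu) < t\}$. Without right continuity one would only obtain $L[f](t) \ge \lambda$, which is not enough. All other steps are routine manipulations of the definitions.
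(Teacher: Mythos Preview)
Your argument is correct. You work with the defining formula $L_1[f](t)=\inf\{\mu\ge 0: d_f(\mu)<t\}$ and establish the sandwich
\[
(0,d_f(\lambda))\subset\{t>0:L[f](t)>\lambda\}\subset(0,d_f(\lambda)],
\]
using monotonicity and right continuity of $d_f$ (valid on $(0,\infty)$ since $f\in\mathbf{F}$) to get the strict inequality on the left, and a direct reading of the infimum for the right inclusion. Finiteness of $d_f(\lambda)$ then gives $d_{L[f]}(\lambda)=d_f(\lambda)$.

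The paper takes a genuinely different route: it proves the statement for the representation $L_2[f](t)=\sup_{|A|=t}\ei_A|f|$, splits into the cases $\Omega_\lambda=\emptyset$, $(0,T)$ or $(0,T]$, $(0,\infty)$, and in each case exploits the non-atomicity of Lebesgue measure to manufacture sets $A\subset\{|f|>\lambda+\delta\}$ of prescribed measure. Your approach is shorter and stays entirely on the distribution-function side; it also makes transparent exactly where the hypothesis $f\in\mathbf{F}$ is used (finiteness of $d_f(\lambda)$ for the measure computation, and right continuity for the strict inequality). The paper's approach, by contrast, makes the geometric meaning of $L_2$ visible and does not explicitly invoke right continuity of $d_f$, relying instead on the ability to carve out subsets of the super-level sets. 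Both are valid; yours is the more economical of the two.
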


\begin{proof}
We will prove this for $L_2[f]$.

Fix $\lambda \in [0,\infty)$ and denote
\[
\Omega_\lambda := \{t \in [0,\infty); L_2[f](t) > \lambda \},
\]
thus $\Omega_\lambda$ is one of $\emptyset,\ (0,T),\ (0,T]$ or $(0,\infty)$ with some $T \in (0,\infty)$.

\medskip

\noindent
$\circ$ \underline{Case: $\Omega_\lambda = \emptyset$.}
In this case, since $d_{L_2[f]}(\lambda) = 0$ it suffers to show $d_f(\lambda)=0$.
We assume that $d_f(\lambda) \in (0,\infty)$.
Then, from the right continuity of the distribution function, there are $\tau \in (0,\infty)$ and $\vep \in (0,\infty)$ such that $d_f(\lambda + \vep) > \tau$.
We can find a measurable subset $A \subset \{|f| > \lambda + \vep \}$ satisfying $|A| = \tau$, which implies $L_2[f](\tau) \ge \lambda + \vep$.
This contradicts $\Omega = \emptyset$.

\medskip

\noindent
$\circ$ \underline{Case: $\Omega_\lambda = (0,T)$ or $(0,T]$.}
In this case, $T = \dis \sup \Omega_\lambda \in (0,\infty)$.
For any $\vep \in (0, T^{-1})$, it holds $L_2[f](T - \vep) > \lambda$.
Hence, there exists $A_\vep \subset \Rn$ so that $|A_\vep| = T - \vep$ and $\dis \inf_{x \in A_\vep} |f(x)| > \lambda$.
Thus $d_f(\lambda) \ge |A_\vep| = T - \vep$, which means $d_f(\lambda) \ge T$.
If we assume that $d_f(\lambda) > T$, then there exist $t_0 \in (T, d_f(\lambda))$ and $\delta > 0$ such that $d_f(\lambda + \delta) > t_0$.
Since we can find $B_\delta \subset \{|f| > \lambda + \delta \}$ fulfilling $|B_\delta| = t_0$, one has
\[
L_2[f](t_0) \ge \dis \inf_{x \in B_\delta} |f(x)| \ge \lambda + \delta,
\]
and then one has the contradiction $t_0 \le T$.
Therefore, $d_f(\lambda) = T = |\Omega_\lambda| = d_{L_2[f]}(\lambda)$.

\medskip

\noindent
$\circ$ \underline{Case: $\Omega_\lambda = (0,\infty)$.}
Since $d_{L[f]}(\lambda) = \infty$, it is enough to show that $d_f(\lambda) \ge t$ for any $t \in (0,\infty)$.
Because $L_2[f](t) > \lambda$ for $t \in (0,\infty)$, we have measurable subsets $\{A_t\}_{t \in (0,\infty)}$ enjoying $|A_t| = t$ and $\dis \inf_{x \in A_t} |f(x)| > \lambda$, which yields $d_f(\lambda) \ge |A_t| = t$.
\end{proof}

\medskip

\section*{Acknowledgement}
The author would like to thank the referee for carefully reading.
This work was supported by JSPS KAKENHI Grant Numbers JP18KK0072, JP19K03538, JP20H01815, JP23K03181.

\bibliography{sn-bibliography}% common bib file
%% if required, the content of .bbl file can be included here once bbl is generated
%%\input sn-article.bbl

\end{document}